\newtheorem{theorem}{Theorem}[section]
\newtheorem{corollary}[theorem]{Corollary}
\theoremstyle{definition}
\theoremstyle{remark}
\numberwithin{equation}{section}
\def\DJ{{\hbox{D\kern-.8em\raise.15ex\hbox{--}\kern.35em}}}
\def\DJo{$\;$\kern-.4em
    \hbox{D\kern-.8em\raise.15ex\hbox{--}\kern.35em okovi\'c}}
\def\vf{{\varphi}}
\def\bZ{{\mbox{\bf Z}}}
\def\bC{{\mbox{\bf C}}}
\def\bR{{\mbox{\bf R}}}
\def\pA{{\mathcal A}}
\def\pN{{\mathcal N}}
\def\pO{{\mathcal O}}
\def\pP{{\mathcal P}}
\def\pS{{\mathcal S}}
\def\tr{{\rm tr\;}}
\def\Pf{{\mbox{\rm Pf\;}}}
\def\pf{{\mbox{\rm pf\;}}}
\def\pl{{\mbox{\rm pl}}}
\def\GL{{\mbox{\rm GL}}}
\def\SO{{\mbox{\rm SO}}}
\def\Ort{{\mbox{\rm O}}}
\renewcommand{\subjclassname}{\textup{2000} Mathematics Subject
Classification}
\begin{document}

\title[Orthogonal invariants of a single matrix]
{On orthogonal and special orthogonal invariants of
a single matrix of small order}

\author[D.\v{Z}. \DJ okovi\'{c}]
{Dragomir \v{Z}. \DJ okovi\'{c}}

\address{Department of Pure Mathematics, University of Waterloo,
Waterloo, Ontario, N2L 3G1, Canada}

\email{djokovic@uwaterloo.ca}

\thanks{
The author was supported by the NSERC Grant A-5285.}

\keywords{}

\date{}

\begin{abstract}
Let $\pS_n$ resp. $\pO_n$ be the algebra of polynomial invariants
for the usual conjugation action of $\SO_n(\bC)$ resp. $\Ort_n(\bC)$
on the space $M_n^0$ of traceless $n\times n$ complex matrices.
Note that $\pO_n=\pS_n$ if $n$ is odd. Minimal generating sets of
$\pS_n$ and $\pO_n$ are known for $n\le4$. We construct one for
$\pO_5=\pS_5$. We also construct a Hironaka decomposition of
$\pO_3=\pS_3$ and a new (more economical) one of $\pS_4$.
A simple presentation (with just one syzygy) is obtained for
the algebra $\pS_3$.
\end{abstract}

\maketitle
\subjclassname{ 13A50, 14L35}

\section{Introduction}

Let $M_n=M_n(\bC)$ resp. $M_n^0=M_n^0(\bC)$ denote the space of
all resp.  traceless $n\times n$ matrices over $\bC$,
the field of complex numbers.
We are interested in the conjugation action, $x\to axa^{-1}$,
of the complex special orthogonal group $\SO_n=\SO_n(\bC)$
and the full complex orthogonal group $\Ort_n=\Ort_n(\bC)$ on 
$M_n$ and $M_n^0$. As the identity matrix $I_n$ is fixed under
this action, we shall deal mostly with the space $M_n^0$.

If $n$ is odd, then $-I_n\notin\SO_n$ and, of course, $-I_n$
acts trivially on $M_n$. Hence in that case there is no need
to consider the case of $\Ort_n$.

Let $\pP_n$ be the algebra of polynomial functions
$M_n^0\to\bC$, graded by the degree of the polynomials.
Denote by $\pP_n^d$ the space of homogeneous polynomials
of degree $d$ in $\pP$.
We denote by $\pS_n$ resp. $\pO_n$ the subalgebra of $\pP_n$
which consists of functions
that are invariant under the action of $\SO_n$ resp. $\Ort_n$.
These are graded subalgebras and we denote by $\pS_n^d$
resp. $\pO_n^d$ their homogeneous components of degree $d$.

Let us recall the following classical result on orthogonal
invariants of a single matrix (the First Fundamental
Theorem for the action of $\Ort_n$ on $M_n^0$)
which says that the algebra $\pO_n$
is genarated by the trace functions $\tr w(x,x')$,
where $x\in M_n^0$ is a generic matrix and $x'$ denotes the
transpose of $x$, and $w$ runs through all words in two letters.
The analog of this for $\SO_n$, and $n=2k$ even, has been proved by
Aslaksen, Tan and Zhu \cite{ATZ} not long ago.
In order to obtain a valid set of generators for $\pS_n$
one has to use not only the traces of words $w(x,x')$
but also the polarized pfaffians of $k$-tuples
$w_1(x,x'),\ldots,w_k(x,x')$ of words in $x$ and $x'$.
The definition of polarized pfaffians will be recalled in
Section \ref{Slu_4}. The question of finding a (finite) minimal set of
generators of $\pS_n$ is more subtle and the results are scarce.

The Poincar\'{e} series of $\pS_n$ is defined by
\[ P(M_n^0,\SO_n)=\sum_{d\ge0} \dim(\pS_n^d) t^d. \]
One defines similarly the Poincar\'{e} series $P(M_n^0,\Ort_n)$
of the graded algebra $\pO_n$.
There is a very simple relation between
these Poincar\'{e} series and the analogous ones for the
action on the full matrix space $M_n$:
\begin{eqnarray}
&& P(M_n,\SO_n)=P(M_n^0,\SO_n)/(1-t), \label{SO} \\
&& P(M_n,\Ort_n)=P(M_n^0,\Ort_n)/(1-t). \label{Ort}
\end{eqnarray}
It is known, see \cite[Theorem 6]{BA} and \cite[Theorem 9.1 (a)]{AB},
that these series are in fact rational functions of $t$.
The answer to the following question is apparently not known:
Is there a simple relationship between the Poincar\'{e}
series (\ref{SO}) and (\ref{Ort}) for $n=2k$ even?

The functions $P(M_n,\SO_n)$ have been computed for small values
of $n$ in various places. The case $n=2$ is elementary:
The algebra $\pS_2$ is the polynomial algebra in two generators, 
the pfaffian of $x$ and the trace of $x^2$. Its Poincar\'{e} series is
\[ P(M_2^0,\SO_2)=\frac{1}{(1-t)(1-t^2)}. \]
For the case $n=3$, see e.g. \cite[Lemma 12, (b)]{BA}
and for $n=4$ see \cite[Theorem 9]{WW}.
The list of the Poincar\'{e} series $P(M_n,\SO_n)$
for $n\le6$, with a sample
computation, is given in \cite[Section 7]{DM}.

Even less is known about the functions $P(M_n,\Ort_n)$
when $n=2k$ is even. The case $n=2$ is easy, see e.g.
\cite[Theorem 10.1]{AB} or \cite[\S36]{KS}:
The algebra $\pO_2$ is the polynomial algebra in two generators
$\tr x^2$ and $\tr xx'$. Its Poincar\'{e} series is
\[ P(M_2^0,\Ort_2)=\frac{1}{(1-t^2)^2}. \]
For the case
$n=4$ see \cite[Theorem 2.2]{DM}. Nothing else seems to
be known.

A homogeneous system of parameters (HSOP) of $\pS_n$ is
an algebraically independent set of homogeneous polynomials
$\{f_1,\ldots,f_m\}\subseteq\pS_n$ such that $\pS_n$
is integral over the polynomial algebra
$\bC[f_1,\ldots,f_m]$, which we denote by $\pS_n^\dag$.
As $\SO_n$ is reductive, an HSOP of $\pS_n$ exists (but it
is far from being unique). The cardinality $m$ of an HSOP is
the Krull dimension of $\pS_n$, i.e., $m=n(n+1)/2-1$.
It is known from the general theory
(see \cite{DK}) that $\pS_n$ is free
as a $\pS_n^\dag$-module and has finite rank, say $r$.
We shall refer to the construction of a basis of this free
module as Hironaka's decomposition.
The subalgebra $\pS_n^\dag$ and the rank $r$ depend on
the choice of the HSOP. For instance, in the case $n=4$,
the paper \cite{DM} gives two different HSOP's
for which $r$ is 24 and 32, respectively.

The main objective of this paper is to construct minimal
generating sets (MSG) and Hironaka
decompositions for the algebras $\pS_n$ for some
small values of $n$. Each of the cases $n=3,4,5$ is treated in
a separate section. For $n=3$ we also find a simple presentation
of $\pS_3$. In the case $n=4$ we construct a more ``economical''
HSOP of $\pS_4$ for which the rank $r=16$.
In the case $n=5$ we only construct an MSG of $\pS_5$
(of cardinality 89). 
There is no need to consider separately the algebras $\pO_n$
since $\pO_3=\pS_3$, $\pO_5=\pS_5$ and the algebra $\pO_4$
has bean dealt with in \cite[Theorem 5.1]{DM}.

Although we work over complex numbers, our results are clearly
valid over any algebraically closed field of characteristic 0.

\section{The case $n=3$} \label{Slu_3}

Our objective here is to construct a Hironaka decomposition of
$\pO_3=\pS_3$. Let us start by quoting a theorem of Sibirski\u\i\,
\cite[Theorem 2.45]{KS}

\begin{theorem} \label{Sib-1}
The algebra of polynomial invariants of the compact group
$\Ort(3)$ acting on real $3\times3$ matrices by conjugation
is generated by the traces of the following $7$ matrices
\[  x,\, x^2,\, x^3,\, xx',\, x^2x',\, x^2(x')^2,\,
xx'x^2(x')^2. \]
Moreover this is a minimal set of generators of this algebra.
\end{theorem}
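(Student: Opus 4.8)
The plan is to derive the theorem from the First Fundamental Theorem for the conjugation action of $\Ort_n$ recalled in the Introduction. Since the compact group $\Ort(3)$ is Zariski dense in $\Ort_3(\bC)$, extension of scalars reduces the statement to describing a generating set of the algebra of $\Ort_3(\bC)$-invariant polynomials on $M_3(\bC)$, and I work there. By the FFT this algebra is linearly spanned by the trace functions $\tr w(x,x')$, where $w$ runs over all words in the two letters $x$ and $x'$ (the word $w=x$ yielding $\tr x$ itself). Let $R$ be the subalgebra generated by the seven functions listed in the theorem. It then suffices to prove: (a) every $\tr w(x,x')$ lies in $R$; and (b) no one of the seven generators lies in the subalgebra generated by the other six.

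For (a) I would first trim the supply of words using the two obvious symmetries of the trace: cyclic invariance, $\tr(uv)=\tr(vu)$, and the transpose relation $\tr w(x,x')=\tr\widetilde w(x,x')$, where $\widetilde w$ is obtained from $w$ by reversing the order of the letters and interchanging $x$ with $x'$ (valid because $w(x,x')^T=\widetilde w(x,x')$ and the trace is transpose-invariant). The engine of the reduction is the Cayley--Hamilton theorem for $3\times3$ matrices,
\[ A^3-(\tr A)\,A^2+\tfrac12\bigl((\tr A)^2-\tr A^2\bigr)A-(\det A)\,I=0, \]
and, more precisely, its full polarization---the fundamental trace identity of degree $4$---which expresses a signed sum of traces of products of four matrices in terms of products of traces of shorter products. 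Applying this identity with its four arguments specialized to subwords of a long word, and using the symmetries above, one shows that the span of the traces of words of length $\le\ell$ stops growing once $\ell$ exceeds a fixed bound; finiteness of the procedure is in any case guaranteed a priori, $\Ort_3$ being reductive. A finite case analysis---organized by word length and by the pattern of $x$'s and $x'$'s around a cyclic word---then collapses the remaining finite list of trace functions to the seven in the statement. What makes this more than a formality is that the reduction does not terminate below degree $6$: one must confirm that the new invariant $\tr xx'x^2(x')^2$ genuinely cannot be eliminated, while simultaneously checking that no word of length $5$, and no further word of length $6$ or $7$, survives. I expect this bookkeeping to be the main obstacle.

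For (b), once (a) is proved $R$ is the whole invariant algebra, and minimality amounts to showing that for each degree $d$ the number of listed generators of degree $d$ (the seven have degrees $1,2,2,3,3,4,6$) equals $\dim(R_+/R_+^2)_d$, the dimension of the space of indecomposable invariants of degree $d$, where $R_+$ is the ideal of elements of positive degree. Concretely, a homogeneous generator $g$ of degree $d$ is redundant precisely when $g$ is a polynomial in the remaining generators of degree $\le d$; so for each of the seven it is enough to exhibit matrices, or a suitably chosen low-dimensional family of matrices, on which all invariants of strictly smaller degree and their products take prescribed values while $g$ does not, and then to dispose of the finitely many same-degree coincidences by direct computation. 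For $d=1,2,3,4$ this is a short linear-algebra verification; the only delicate case is $d=6$, where one shows $\tr xx'x^2(x')^2$ is not expressible through the other six generators by restricting to a one- or two-parameter family of matrices along which the lower-degree invariants degenerate. Comparing the resulting generator count, degree by degree, with the known rational form of the Poincar\'e series of the invariant algebra provides an independent consistency check.
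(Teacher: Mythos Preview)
The paper does not prove this theorem at all: it is quoted verbatim from Sibirski\u\i's book \cite[Theorem 2.45]{KS} and used as a black box to obtain Theorem~\ref{S_3}. So there is no ``paper's own proof'' to compare against.

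Your outline is a reasonable strategy and is close in spirit to how such results are actually established. A cleaner organization, however, is the one the paper itself uses later for $n=5$ (Theorem~\ref{Sib-2} and Section~\ref{Slu_5}): rather than attacking the $\Ort(3)$-problem directly, one starts from the known MSG of the $\GL_3$-invariant ring $C_{3,2}(0)$ of two generic traceless $3\times3$ matrices (eleven trace generators, degrees $\le6$, due to Sibirski\u\i/Teranishi), pulls back along $x\mapsto(x,x')$, and then observes that the transpose symmetry $\tr w(x,x')=\tr\widetilde w(x,x')$ collapses the eleven generators to exactly the seven listed (together with $\tr x$). This replaces your open-ended ``finite case analysis organized by word length'' with a single citation plus a short symmetry check, and it makes the bound on word length (namely $6$) come for free rather than being something you must discover along the way. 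Your minimality argument in~(b) is fine in principle; in practice one usually just compares the generator count degree by degree with the known Poincar\'e series, exactly as you suggest at the end.
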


He also shows that one can replace here the generator $\tr x^2(x')^2$
with $\tr (xx')^2$, and $\tr xx'x^2(x')^2$ with $\tr x'x(x')^2x^2$.

The complex version of this theorem, for the complex orthogonal
group $\Ort_3$ acting on $M_3$ by conjugation, is an immediate
consequence. As we prefer to work with $M_3^0$, we should
just omit the first generator, $\tr x$. Hence the following
result is valid:

\begin{theorem} \label{S_3}
The algebra $\pO_3=\pS_3$ is generated by the invariants $E_1,\ldots,E_6$
which are defined as the traces of the matrices:
\[  x^2,\, xx',\, x^3,\, x^2x',\, x^2(x')^2,\, xx'x^2(x')^2, \]
respectively.
Moreover this is a minimal set of generators of $\pS_3$.
\end{theorem}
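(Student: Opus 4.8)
The plan is to derive both assertions from Sibirski\u\i's theorem (Theorem~\ref{Sib-1}), which is already quoted, by the two standard reductions: from the compact real group to the complex orthogonal group, and from $M_3$ to $M_3^0$. For generation: since $\Ort(3)=\Ort_3(\bR)$ is Zariski dense in $\Ort_3$, a polynomial on $M_3(\bC)$ is $\Ort_3$-invariant iff it is $\Ort(3)$-invariant, and the compact Reynolds operator (Haar averaging, which is $\bR$-linear) extends $\bC$-linearly and identifies $\bC[M_3]^{\Ort_3}$ with $\bR[M_3(\bR)]^{\Ort(3)}\otimes_\bR\bC$; hence the seven trace functions of Theorem~\ref{Sib-1} generate $\bC[M_3]^{\Ort_3}$ as a $\bC$-algebra. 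To descend to $M_3^0$, use the conjugation-stable splitting $M_3=M_3^0\oplus\bC I_3$, which gives $\bC[M_3]^{\Ort_3}=\pO_3[\tr x]$, a polynomial ring in the invariant $\tr x$ over $\pO_3$. Substituting $x=x_0+\tfrac13(\tr x)I_3$ into each of the six words $x^2,\,xx',\,x^3,\,x^2x',\,x^2(x')^2,\,xx'x^2(x')^2$ and expanding shows that the trace of the word equals the corresponding $E_i$, i.e.\ the same trace evaluated at $x_0$, plus a polynomial in $\tr x$ and lower-degree $E_j$; consequently $\{\tr x,E_1,\dots,E_6\}$ generates $\pO_3[\tr x]$, so $\{E_1,\dots,E_6\}$ generates $\pO_3=\pS_3$.

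For minimality I would transport Sibirski\u\i's minimality statement back from $\bC$ to $\bR$. Suppose some $E_i$ were a polynomial over $\bC$ in the remaining five; restrict that identity to the real locus $M_3^0(\bR)$ and take real parts of both sides. This is legitimate because the real and imaginary parts of a complex $\Ort(3)$-invariant are real $\Ort(3)$-invariants, and a complex polynomial function vanishing on $M_3^0(\bR)$ vanishes identically. One obtains an $\bR$-polynomial identity expressing the restriction of $E_i$ through the restrictions of the other five. Feeding this through the triangular substitution of the previous paragraph, now over $\bR$, between the $E_j$ and the trace functions of Theorem~\ref{Sib-1}, and using that adjoining the free variable $\tr x$ to a polynomial ring over a subalgebra does not affect minimality of the other generators, one concludes that the $i$-th generator listed in Theorem~\ref{Sib-1} is redundant, contradicting its minimality. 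Hence $\{E_1,\dots,E_6\}$ is a minimal set of generators. In this argument there is essentially no deep obstacle; the only point that needs care is the bookkeeping in the two reductions.

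A self-contained alternative, closer in spirit to the Hironaka-decomposition computations in the rest of this section, is to check directly that $E_1,\dots,E_6$ map to a basis of the space of indecomposables $\pS_3^{+}/(\pS_3^{+})^2$, using the known Poincar\'e series of $\pS_3$ (see \cite{BA}). The generators have degrees $2,2,3,3,4,6$ and $\pS_3^1=0$, so in degrees $2$ and $3$ there are no decomposable invariants and one only needs the easy linear independence of $E_1,E_2$ and of $E_3,E_4$; in degree $4$ one checks that $E_5$ is not a linear combination of $E_1^2,E_1E_2,E_2^2$ and that these four span $\pS_3^4$; in degree $5$ that the four products $E_iE_j$ with $i\in\{1,2\}$, $j\in\{3,4\}$ already span $\pS_3^5$; and in degree $6$ that $E_6$ is linearly independent of the nine products $E_1^3,E_1^2E_2,E_1E_2^2,E_2^3,E_1E_5,E_2E_5,E_3^2,E_3E_4,E_4^2$, which by the dimension count then span a hyperplane in $\pS_3^6$. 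Nothing new can be needed in degree $>6$ because $E_1,\dots,E_6$ already generate. In this route the real work is concentrated in the degree-$6$ verification, which is the only laborious step.
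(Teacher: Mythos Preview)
Your proposal is correct and follows essentially the same route as the paper: the paper simply asserts that the complex version of Sibirski\u\i's theorem is ``an immediate consequence'' and that one passes from $M_3$ to $M_3^0$ by dropping $\tr x$, whereas you have spelled out the two standard reductions (Zariski density / complexification of the compact group, and the $\Ort_3$-stable splitting $M_3=M_3^0\oplus\bC I_3$) and the triangular bookkeeping that justifies transferring both generation and minimality. Your alternative Poincar\'{e}-series argument for minimality is not in the paper, but it is a legitimate independent check and fits naturally with the Hironaka-decomposition computations that follow.
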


The Poincar\'{e} series of $\pS_3$ can be written as
\[ P(M_3^0,\SO_3)=\frac{1+t^6}{(1-t^2)^2(1-t^3)^2(1-t^4)}. \]
This expression suggests that there should exist an HSOP
whose degrees are $2,2,3,3,4$. We shall prove that this is
indeed the case.

Let $\pS_3^\dag$ be the subalgebra of $\pS_3$ generated by
the first 5 invariants $E_i$.

\begin{theorem} \label{Hir_3}
The invariants $E_1,\ldots,E_5$ form an HSOP of the algebra
$\pS_3$. The identity element $1$ and $E_6$ form a free basis
of $\pS_3$ as a $\pS_3^\dag$-module.
\end{theorem}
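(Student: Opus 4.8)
The plan is to verify the two assertions separately: first that $E_1,\ldots,E_5$ are algebraically independent and that $\pS_3$ is integral over $\pS_3^\dag=\bC[E_1,\ldots,E_5]$ (so they form an HSOP), and second that $\{1,E_6\}$ spans $\pS_3$ as a $\pS_3^\dag$-module. For the first part, since the Krull dimension of $\pS_3$ is $3\cdot4/2-1=5$, any $5$-element homogeneous set that generates a subalgebra over which $\pS_3$ is integral is automatically algebraically independent; so it suffices to show integrality. Because $\SO_3$ is reductive, an HSOP exists, and a standard way to check that $E_1,\ldots,E_5$ form one is to show that their common zero locus in $M_3^0$ is exactly the nullcone $\{x : \tr w(x,x')=0 \text{ for all words } w\}$, equivalently (by Theorem~\ref{S_3}) the locus where $E_1,\ldots,E_6$ all vanish. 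So the key geometric step is: \emph{if $x\in M_3^0$ satisfies $E_1(x)=\cdots=E_5(x)=0$, then also $E_6(x)=0$}. I would attack this by a direct computation — parametrizing $x$ (or using the structure of the $3\times3$ case, e.g. the eigenvalue/Jordan-type analysis of $x$ together with the constraints coming from $xx'$), showing that $E_1=E_2=0$ already forces $x$ to be rather degenerate, and then that the remaining vanishing conditions force $xx'x^2(x')^2$ to be traceless.

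For the module statement, I would argue as follows. The Poincar\'e series identity
\[ P(M_3^0,\SO_3)=\frac{1+t^6}{(1-t^2)^2(1-t^3)^2(1-t^4)} \]
combined with the fact (once the HSOP property is established) that $\pS_3$ is a free $\pS_3^\dag$-module shows that the Hilbert series of that free module is $(1+t^6)$ times the Hilbert series of $\pS_3^\dag$; hence the module has rank $2$ with a homogeneous basis in degrees $0$ and $6$. The degree-$0$ basis element is forced to be (a scalar multiple of) $1$. It then remains to produce \emph{some} degree-$6$ invariant that together with $1$ generates: I claim $E_6$ works. Concretely, I would check that $1$ and $E_6$ are linearly independent over $\pS_3^\dag$ — it is enough that $E_6\notin\pS_3^\dag$, i.e. $E_6$ is not a polynomial in $E_1,\ldots,E_5$; this follows because $E_6$ is part of a \emph{minimal} generating set by Theorem~\ref{S_3}, so it cannot lie in the subalgebra generated by the other five. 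Since $\{1,E_6\}$ is a linearly independent (over $\pS_3^\dag$) homogeneous set in degrees $0,6$ inside a free rank-$2$ module whose basis sits in exactly degrees $0,6$, a Hilbert-series/graded-Nakayama count forces $\pS_3^\dag\cdot 1 + \pS_3^\dag\cdot E_6 = \pS_3$.

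The main obstacle is the geometric step for the HSOP: showing the common zero set of $E_1,\ldots,E_5$ is the nullcone, equivalently that $E_6$ vanishes there. One has to rule out, for each possible Jordan type of the generic (non-semisimple) $x$ and each compatible choice of the "transpose interaction", that $E_6$ could be nonzero while $E_1,\ldots,E_5$ all vanish; organizing this case analysis cleanly — perhaps by first using the $E_i=0$ relations to show $x$ is nilpotent, and then that $x$ (up to $\SO_3$-conjugacy) lies in a small family on which $E_6$ is manifestly zero — is where the real work lies. Everything after that (algebraic independence, the module decomposition) is formal once the Poincar\'e series and the minimality from Theorem~\ref{S_3} are in hand.
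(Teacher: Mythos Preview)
Your strategy is sound, but it takes a different route from the paper. The paper does not analyze the nullcone at all for $n=3$; instead it simply exhibits an explicit monic quadratic relation
\[
E_6^2 + c_1 E_6 + c_2 = 0,\qquad c_1,c_2\in\pS_3^\dag,
\]
with $c_1,c_2$ written out in closed form (found by computer algebra). This single identity does everything at once: it shows $\pS_3=\pS_3^\dag[E_6]$ is integral over $\pS_3^\dag$, hence (together with a Jacobian rank check for algebraic independence) that $E_1,\ldots,E_5$ is an HSOP; and since the relation is monic of degree $2$, it immediately gives $\{1,E_6\}$ as a module basis. As a bonus it yields the one-relation presentation of $\pS_3$ stated in the subsequent corollary, something your nullcone argument would not directly produce.

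Your approach---show $V(E_1,\ldots,E_5)=\pN$ by checking that $E_6$ vanishes there, then read the module structure off the Poincar\'e series plus minimality of $E_6$---is exactly the method the paper itself uses for $n=4$ (see the proof of the HSOP claim there), so it is entirely legitimate. The trade-off is that you replace one symbolic identity by a geometric case analysis you have not yet carried out, and you lose the explicit syzygy. Your module argument (graded Nakayama/Hilbert-series count with $E_6\notin\pS_3^\dag$ from minimality) is correct and is essentially what the paper means by ``the second claim now follows easily.''
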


\begin{proof}
It is easy to check that the Jacobian matrix of the invariants
$E_1,\ldots,E_5$ has rank $5$. This implies that these 
invariants are algebraically independent. It is easy to verify
(e.g. by using Maple) that the remaining
generator $E_6$ satisfies the quadratic equation
$E_6^2+c_1E_6+c_2=0$ with coefficients $c_1,c_2\in\pS_3^\dag$
given by
\begin{eqnarray*}
3c_1 &=& E_3^2-3E_2E_5-3E_4^2, \\
144c_2 &=& 72E_1^4E_5+144E_5(E_1E_3E_4+E_5^2)+16E_3^2(E_2^3+E_3^2) \\
 && +24E_1^2E_2E_3(E_3+E_4)-3E_1(E_1^2+E_2^2)(16E_3E_4+3E_1^3) \\
 && +36E_1^2(E_1E_4^2+E_2^2E_5-5E_5^2)+96E_3E_4^2(E_4-E_3) \\
 && +144E_4^2E_5(E_2-E_1)-4E_3^2(E_1^3+24E_2E_5).
\end{eqnarray*}
As $\pS_2=\pS_2^\dag[E_6]$, we deduce that
$\pS_2$ is integral over $\pS_2^\dag$ and, consequently,
$\{E_1,\ldots,E_5\}$ is an HSOP of $\pS_3$.
The second claim now follows easily.
\end{proof}

As a consequence, we obtain the following:

\begin{corollary} \label{pres}
The algebra $\pS_3$ has the following presentation:
\[ \pS_3\cong \bC[t_1,\ldots,t_6]/(t_6^2+c_1t_6+c_2), \]
where the $t_i$'s are $6$ independent commuting variables and
the coefficients $c_1,c_2\in\bC[t_1,\ldots,t_5]$
are obtained from the formulas displayed above by replacing
each $E_i$ with $t_i$ for $i=1,\ldots,5$.
\end{corollary}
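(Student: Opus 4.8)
The plan is to derive the presentation of $\pS_3$ directly from the Hironaka decomposition established in Theorem \ref{Hir_3}. By that theorem, $\{E_1,\ldots,E_5\}$ is an HSOP, so these five invariants are algebraically independent and hence $\pS_3^\dag=\bC[E_1,\ldots,E_5]$ is a polynomial ring; moreover $\pS_3$ is a free $\pS_3^\dag$-module with basis $\{1,E_6\}$. I would first set up the surjective $\bC$-algebra homomorphism $\pi\colon\bC[t_1,\ldots,t_6]\to\pS_3$ sending $t_i\mapsto E_i$; surjectivity is exactly the statement of Theorem \ref{S_3}. The goal is then to identify $\ker\pi$ with the principal ideal $I=(t_6^2+c_1t_6+c_2)$, where $c_1,c_2\in\bC[t_1,\ldots,t_5]$ are the lifts of the coefficients computed in the proof of Theorem \ref{Hir_3}.

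The inclusion $I\subseteq\ker\pi$ is immediate, since $E_6^2+c_1(E)E_6+c_2(E)=0$ in $\pS_3$ by the quadratic relation verified (via Maple) in Theorem \ref{Hir_3}. For the reverse inclusion, the key observation is that modulo $I$ every element of $\bC[t_1,\ldots,t_6]$ can be written in the form $a(t_1,\ldots,t_5)+b(t_1,\ldots,t_5)\,t_6$: one simply uses the relation $t_6^2\equiv -c_1t_6-c_2\pmod I$ repeatedly to reduce the $t_6$-degree of any polynomial to at most $1$. Thus $\bC[t_1,\ldots,t_6]/I$ is spanned as a $\bC[t_1,\ldots,t_5]$-module by the images of $1$ and $t_6$. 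Now suppose $f\in\ker\pi$; reduce $f$ modulo $I$ to $a+bt_6$ with $a,b\in\bC[t_1,\ldots,t_5]$. Applying $\pi$ gives $a(E_1,\ldots,E_5)+b(E_1,\ldots,E_5)E_6=0$ in $\pS_3$. Since $\{1,E_6\}$ is a \emph{free} basis of $\pS_3$ over $\pS_3^\dag$, this forces $a(E_1,\ldots,E_5)=b(E_1,\ldots,E_5)=0$; and since $E_1,\ldots,E_5$ are algebraically independent, $a=b=0$ as polynomials. Hence $f\in I$, proving $\ker\pi=I$ and therefore $\pS_3\cong\bC[t_1,\ldots,t_6]/I$.

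I do not expect any serious obstacle here: the corollary is essentially a formal consequence of the freeness and rank-$2$ statement in Theorem \ref{Hir_3} together with the algebraic independence of $E_1,\ldots,E_5$. The only point requiring a little care is the division-algorithm argument showing that $I$-reduction genuinely lands in $\bC[t_1,\ldots,t_5]\oplus\bC[t_1,\ldots,t_5]\,t_6$; this works because the relation is \emph{monic} in $t_6$ (leading coefficient $1$), so the reduction terminates and introduces no denominators. One should also remark, for completeness, that the single generator $t_6^2+c_1t_6+c_2$ really does generate the full syzygy ideal — i.e. there are no further relations — which is precisely what the computation of $\ker\pi$ above establishes, so the phrase ``with just one syzygy'' in the abstract is justified.
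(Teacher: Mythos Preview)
Your argument is correct and is exactly the standard deduction the paper has in mind; indeed the paper offers no separate proof, simply labelling the statement a corollary of Theorem~\ref{Hir_3}. Your write-up just makes explicit the kernel computation (monic reduction in $t_6$, then freeness of $\{1,E_6\}$ and algebraic independence of $E_1,\ldots,E_5$) that the author leaves to the reader.
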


\section{The case $n=4$} \label{Slu_4}

Let us write $\pf x$ for the ordinary pfaffian of a
skew-symmetric matrix $x$. We defined in \cite{DM} a version
of the pfaffian for arbitrary matrices $x$ for which we used the
notation $\Pf x$. The definition is very simple:
$\Pf x=\pf(x-x')$. The same definition was already introduced
in \cite{ATZ} where the notation $\tilde{\pf}$ was used instead
of our $\Pf$. Note that if $x$ is skew-symmetric, then we have
$\Pf x=\pf(2x)$.

For the general definition of polarized pfaffians we refer
the reader to \cite{ATZ}. We give the definition only in the case
that we need, the case of $4\times4$ matrices.
Let $s$ and $t$ be two commuting indeterminates.
If $x,y\in M_4$ then their polarized pfaffian,
which we denote by $\pl(x,y)$ as in \cite{ATZ},
is the coefficient of $st$ in the expansion
of the polynomial $\Pf(sx+ty)$. One can easily check that
$\pl(x,y)$ is a bilinear function of $x$ and $y$ and that
$\pl(x,x)=2\Pf x$ is valid for any $4\times4$ matrix $x$.

In Table 1 we provide the list of 20 polynomials
$U_i,V_j\in\pS_4$, where $A$ stands for a generic $4\times4$
matrix of trace 0 and $B$ for its transpose.
Denote by $\pS_4^\dag$ the subalgebra of $\pS_4$ generated
by the first 9 of these generators, the $U_i$'s.

\begin{center}
    \textbf{Table 1:} Generators of $\pS_4$
    $$
    \begin{array}{l c l}
        U_1=\Pf A              &\quad\quad& V_1=\Pf A^2 \\
        U_2=\tr A^2         &     & V_2=\tr A^3B^2 \\
        U_3=\tr AB   &   & V_3=\tr A^2B^2AB \\
        U_4=\tr A^3  &   & V_4=\Pf ABA \\
        U_5=\tr A^2B  &  & V_5=\Pf A^2(A+B) \\
        U_6=\tr A^4 &    & V_6=\tr A^3B^2AB \\
        U_7=\tr (AB)^2 & & V_7=\pl(AB^3,ABA) \\
        U_8=\tr A^2B^2 & & V_8=\tr A^3BA^2B^2 \\
        U_9=\tr A^3BAB & & V_9=\Pf A^2BA \\
		&	 & V_{10}=\tr A^3BA^2BAB \\
		&	 & V_{11}=\pl(BA^2B^2,A^3B)
    \end{array}
    $$
\end{center}

Two different Hironaka decompositions of $\pS_4$ have been constructed
in \cite{DM}, with $r=24$ and $32$ (see the Introduction for the
definition of $r$).  However we failed to observe that there is probably
a better one (with $r=16$) as suggested by the following form of
the Poincar\'{e} series which is taken from \cite{WW}:
\begin{eqnarray*}
P(M_4^0,\SO_4) &=& \frac{1+t^4+t^5+3t^6+2t^7+2t^8+3t^9+t^{10}+t^{11}+t^{15}}
{(1-t^2)^3(1-t^3)^2(1-t^4)^3(1-t^6)} \\
 &=& 1+3t^2+2t^3+10t^4+7t^5+29t^6+25t^7+73t^8 \\
 && +74t^9+172t^{10}+187t^{11}+381t^{12}+431t^{13} \\
 && +785t^{14}+920t^{15}+1539t^{16}+1827t^{17}+\cdots
\end{eqnarray*}
Observe that the degrees $d$ of the nine factors $1-t^d$ in the
above denominator match the degrees of the invariants $U_i$.

We can now construct this new Hironaka decomposition of $\pS_4$.

\begin{theorem} \label{S_4}
(a) The $20$ invariants $U_i$ and $V_j$ generate $\pS_4$
as a complex unital algebra.

(b) The set of $20$ algebra generators listed in (a) is minimal.

(c) The $9$ invariants $U_i$ listed in Table $1$ form a
homogeneous system of parameters of the algebra $\pS_4$.

(d) The algebra $\pS_4$ is a free $\pS_4^\dag$-module of
rank $16$ with basis consisting of the identity element $1$,
the eleven $V_i$'s and the four products $V_1V_2$, $V_2V_3$,
$V_2V_4$ and $V_6V_{10}$.
\end{theorem}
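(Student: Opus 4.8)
The plan is to establish the four parts essentially in the order (c), (a), (b), (d), since the homogeneous system of parameters is the structural backbone that the rest rests on. For part (c) I would first check that the nine $U_i$ are algebraically independent: since $\pS_4$ has Krull dimension $4\cdot5/2-1=9$, it suffices to exhibit a single traceless $4\times4$ matrix $A_0$ at which the Jacobian matrix $(\partial U_i/\partial a_{k\ell})$ has rank $9$; this is a finite computation (Maple). To see that $\pS_4$ is integral over $\pS_4^\dag=\bC[U_1,\dots,U_9]$ it then suffices, by the Hilbert–Mumford / Noether-type criterion, to verify that the common zero set in $M_4^0$ of $U_1,\dots,U_9$ is exactly the null-cone, i.e.\ consists only of matrices $A$ on which \emph{all} homogeneous invariants of positive degree vanish. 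Equivalently, one checks that the only closed $\SO_4$-orbit in $V(U_1,\dots,U_9)$ is $\{0\}$; for this I would run down the list of closed orbits (semisimple conjugacy classes under $\Ort_4$) and show each nonzero one is already separated by some $U_i$. This is the step I expect to be the main obstacle: it is the one genuine piece of invariant-theoretic content, and getting a clean case analysis of the semisimple classes — parametrized by the eigenvalue pattern together with the symmetric/skew refinement that $\Ort_4$ (as opposed to $\GL_4$) sees — requires some care.

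Once (c) is in hand, parts (a) and (d) can be proved together by a Poincaré-series bookkeeping argument. Write $R=\pS_4^\dag$, a polynomial ring whose Hilbert series is $\prod_{i}(1-t^{d_i})^{-1}$ with the $d_i$ equal to the degrees $2,2,2,3,3,4,4,4,6$ of the $U_i$ — which, as the excerpt notes, are exactly the denominator factors of the displayed $P(M_4^0,\SO_4)$. Since $\SO_4$ is reductive and $U_1,\dots,U_9$ is an HSOP, $\pS_4$ is a free $R$-module of rank $r$, and the numerator polynomial $N(t)=1+t^4+t^5+3t^6+2t^7+2t^8+3t^9+t^{10}+t^{11}+t^{15}$ records the degrees of a homogeneous free basis, with $N(1)=r=16$. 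The candidate basis is $1$, the eleven $V_j$, and $V_1V_2,V_2V_3,V_2V_4,V_6V_{10}$; I would check their degrees are $0;\,4,6,6,7,7,8,6,9,7,10,7;\,8,9,9,15$ and confirm these multiset-match the exponents of $N(t)$. It then remains to prove these $16$ elements are $R$-linearly independent (which, degree by degree, is again a finite linear-algebra check over $R$, done by specializing $A$ and computing ranks) and that they span $\pS_4$ over $R$ (equivalently, that the $R$-submodule they generate already has the full Hilbert series $N(t)\cdot\prod(1-t^{d_i})^{-1}$, which follows once independence is known, by comparing with the known $P(M_4^0,\SO_4)$). Spanning over $R$ gives (d); and since every basis element is a polynomial in the $U_i$ and $V_j$, it gives (a) as well — in fact one only needs that the four extra products $V_1V_2,V_2V_3,V_2V_4,V_6V_{10}$ already lie in the algebra generated by the listed $20$ elements, which is automatic.

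For part (b), minimality, I would argue grade by grade that no generator is redundant. A generator $U_i$ or $V_j$ of degree $d$ is superfluous only if it lies in the subalgebra generated by all the others, equivalently in the subalgebra $\pS_4^{(d-)}$ generated by all listed generators of degree $<d$ together with the remaining generators of degree exactly $d$. Using the Poincaré series one computes $\dim\pS_4^d$ for the relevant small $d$ (the series is printed: $3,2,10,7,29,25,73,74,\dots$ in degrees $2,3,4,\dots$), compares it with the dimension of the span of all products of lower-degree generators plus the other degree-$d$ generators, and checks the drop in dimension when the generator in question is removed is strictly positive. Concretely: $U_2,U_3$ are needed in degree $2$ because $\dim\pS_4^2=3$ but there is one more ($\Pf A=U_1$ has degree $2$ as well — note $U_1=\Pf A$ is degree $2$), and $\pS_4^1=0$ leaves no products; $U_4,U_5$ in degree $3$ since $\dim\pS_4^3=2$ and $(\pS_4^2)^{\cdot}\!\cdot\pS_4^1=0$; and so on up through degree $15$, where $V_{15}$-type generators and the independence of the basis element $V_1V_2$ etc.\ pin down the last ones. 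Each such comparison is a rank computation on explicit matrices; the work is mechanical once the framework is set up, so I would present a table of the dimension counts rather than the individual computations. The only subtlety is making sure, at each degree, that the products of lower-degree generators are enumerated completely, which is where a computer algebra check is indispensable.
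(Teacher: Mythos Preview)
Your overall strategy matches the paper's: Jacobian rank plus a null-cone argument for (c), and a Hilbert-series/dimension comparison over $\pS_4^\dag$ for (a) and (d). The paper carries this out exactly as you describe, computing $\dim M^d$ for $d\le 17$ and matching it against the Taylor coefficients of $P(M_4^0,\SO_4)$.

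There are two points where the paper is more economical than your plan. For (b), the paper does not do a degree-by-degree redundancy check; it simply invokes the general fact that all minimal generating sets of a positively graded connected algebra have the same cardinality, together with the previously established MSG of size $20$ from \cite[Theorem~3.1(c)]{DM}. For (c), the null-cone equality $\pN=Z$ is not re-proved but deferred to the analogous argument in \cite[Theorem~3.1]{DM}. Likewise, for (a) the paper uses the prior bound from \cite{DM} that $\pS_4$ is generated in degree $\le 10$, which is what allows a finite dimension check (through degree $17$) to certify generation. Your route---deducing (a) from (d)---is also valid and avoids that external input, at the cost of having to establish (d) first.

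One computational slip: your list of degrees for the $V_j$ and the four products is incorrect. In fact $\deg V_j = 4,5,6,6,6,7,7,8,8,9,9$ for $j=1,\dots,11$ (e.g.\ $V_2=\tr A^3B^2$ has degree $5$, not $6$; $\Pf(ABA)$ has degree $6$, not $7$). You should recompute these before attempting to match the multiset of basis degrees against the exponents in the numerator $1+t^4+t^5+3t^6+2t^7+2t^8+3t^9+t^{10}+t^{11}+t^{15}$.
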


\begin{proof}
(a) Denote by $M$ the $\pS_4^\dag$-module generated
by the 16 invariants listed in part (d) of the theorem.
We have computed the dimensions of the homogeneous components
of $M$ for all degrees $\le17$. These dimensions match the
corresponding coefficients in the Taylor expansion of
$P(M_4^0,\SO_4)$. As $\pS_4$ is generated as a unital algebra by
homogeneous polynomials of degree $\le10$ (see \cite{DM})
this is more than enough to deduce that (a) holds.

(b) Since all minimal generating sets of $\pS_4$ have the same
cardinality, (b) follows from \cite[Theorem 3.1 (c)]{DM}.

(c) It is easy to verify that the Jacobian matrix of the $U_i$'s
has rank 9 and we deduce that the $U_i$'s are algebraically
independent. Hence $\pS_4^\dag$ is a polynomial algebra on
9 generators. Let $J$ be the ideal of $\pP_4$ generated by all
homogeneous polynomials in $\pS_4$ of positive degree. Denote
by $\pN\subseteq M_4^0$ the affine variety defined by $J$.
As $J$ is a graded ideal, $\pN$ is a cone, known as the
Hilbert's null-cone. Denote by $Z$ the affine variety in
$M_4^0$ defined by the equations $U_i=0$ for $1\le i\le9$.
Clearly, we have $\pN\subseteq Z$. Furthermore it is well
known that the $U_i$'s will form an HSOP of $\pS_4$ iff
$\pN=Z$. Our proof of the equality $\pN=Z$ is almost
the same as the proof given in \cite[Theorem 3.1]{DM}.
Hence we can safely omit it.

(d) We know that $\pS_4$ is a free $\pS_4^\dag$-module
of finite rank $r$. From our expression for $P(M_4^0,\SO_4)$
we deduce that $r=16$. The computation mentioned in the
proof of part (a) shows that $M=\pS_4$ and the last assertion of
the theorem follows.
\end{proof}

We conclude this section by a few remarks about our joint paper
\cite{DM} with M.L. MacDonald.

First of all there is a misprint:
In the sixth line of Section 3, $K_1$ should be replaced by $K_7$.
In the same paragraph, we made the observation that the algebra
$\pS_4$ is not generated by the traces and the pfaffians of words
in $x$ and $x'$.
At that time we were not aware of a general result of Aslaksen et al.
\cite{ATZ} from which it follows that $\pS_4$ is generated by
the traces and polarized pfaffians of words in $x$ and $x'$.
For that reason we had to construct some of the generators of
$\pS_4$ by ad hoc methods. These are the generators $J_9$,
$K_7$, $K_9$ and $K_{11}$ in Table 1 of that paper.

Second, we can now give explicit expressions for these four
ad hoc generators in terms of the other
generators $J_i$ and $K_i$ in \cite[Table 1]{DM} and
the generators $U_i$ and $V_i$ defined in Section \ref{Slu_4}.
For $J_9$ and $K_9$
we do not need to use polarized pfaffians as we have
\begin{eqnarray*}
J_9 &=& 2 \left( \Pf A^2(A+B)-\Pf B^2(A+B) \right), \\
4K_9 &=& 4J_9 \left( J_3-2J_2 \right)-8J_1K_5+\Pf BAB^2-\Pf ABA^2
\end{eqnarray*}
where, to be consistent with the notation in \cite{DM},
we write $A$ for $x$ and $B$ for $x'$. Thus $B=A'$.
For the remaining two generators we have more complicated formulas
\begin{eqnarray*}
6K_7 &=& U_1\left( 2U_3U_4+U_2U_4-3U_2U_5 \right)
+2 \left( U_4-3U_5 \right) V_1-12V_7, \\
48K_{11} &=& 2U_1 \left( U_4(12U_8-4U_1^2-13U_6)+3U_5(7U_6+2U_7) \right) \\
 && +4(3U_5+U_4)(2V_4-U_1U_3^2) \\
 && +(3U_5-U_4)\left( 4(U_3V_1+12V_5)-U_1U_2(7U_2+2U_3) \right) \\
 && +24 \left( 2U_2(U_5-U_4)V_1+(2V_1-U_1U_3)V_2 \right. \\
 && \quad \quad \quad \left. +2U_1V_6+(U_3-U_2)V_7-4V_{11} \right),
\end{eqnarray*}
where $V_7$ and $V_{11}$ are defined using polarized pfaffians
(see Table 1).

\section{The case $n=5$} \label{Slu_5}

Our objective in this section is more modest: We shall
construct a minimal set of homogeneous generators (MSG) of
the algebra $\pO_5=\pS_5$. The construction is based on a result
we proved recently in our paper \cite{DZ}. However this result
has to be adapted for our use here, see Theorem \ref{nov-MSG}
in the Appendix.

Let us quote an elementary but useful theorem of Sibirski\u\i\,
\cite[Theorem 4.33]{KS}. In order to state it, it is convenient 
to introduce the linear map
\[ \vf_n:M_n(\bR)\to M_n(\bR)\times M_n(\bR),\quad x\to(x,x'). \]
Let the compact orthogonal group $\Ort(n)$ act on $M_n(\bR)$
by $(a,x)\to axa^{-1}$ and the real general linear group
$\GL_n(\bR)$ act on $M_n(\bR)\times M_n(\bR)$ by the usual
diagonal conjugation action $a\cdot(x,y)=(axa^{-1},aya^{-1})$.
If $f:M_n(\bR)\times M_n(\bR)\to \bR$ is a polynomial function,
then its pullback $\vf_n^*(f)=f\circ\vf$ is a polynomial
function on $M_n(\bR)$. It is obvious that if $f$ is invariant
then so is $\vf_n^*(f)$. Hence, the pullback homomorphism
$\vf_n^*$ maps $\GL_n(\bR)$-invariants to $\Ort(n)$-invariants.

\begin{theorem} \label{Sib-2}
The pullback homomorphism $\vf_n^*$ maps the algebra of
$\GL_n(\bR)$-invariants onto the algebra of $\Ort(n)$-invariants.
\end{theorem}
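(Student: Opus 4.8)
The plan is to prove Theorem \ref{Sib-2} by reducing the assertion to a statement about separating orbits, using the fact that the orthogonal group is defined by a nondegenerate symmetric bilinear form and that $\GL_n$-invariants of a pair of matrices under simultaneous conjugation are classical. First I would observe that the image of $\vf_n^*$ is a subalgebra $\pA$ of the algebra $R$ of $\Ort(n)$-invariant polynomials on $M_n(\bR)$, and that it suffices to show $\pA = R$. Since $\Ort(n)$ is a compact (hence reductive) group acting on a real vector space, $R$ separates the $\Ort(n)$-orbits in $M_n(\bR)$; so the key reduction is: it is enough to show that $\pA$ already separates these orbits, together with a normality/integral-closure argument (or a direct argument) to upgrade orbit-separation to equality of algebras. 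Concretely, if $\pA$ separates $\Ort(n)$-orbits and $\pA$ is integrally closed in $R$ (or one invokes that $R$ is the integral closure of $\pA$ in its field of fractions and that orbit separation forces the field extension to be trivial), then $\pA = R$.

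The heart of the matter is the orbit-separation claim. Suppose $x,y \in M_n(\bR)$ are not in the same $\Ort(n)$-orbit; I must produce $h \in \pA$ with $h(x)\ne h(y)$. By definition $\pA$ consists of the functions $x \mapsto g(x,x')$ with $g$ a $\GL_n(\bR)$-invariant polynomial on $M_n(\bR)\times M_n(\bR)$; by the classical theory of invariants of a pair of matrices, such $g$ are generated by traces of words in the two matrix arguments, so $\pA$ is generated by the functions $x \mapsto \tr w(x,x')$ for words $w$. Thus the claim becomes: if all trace functions $\tr w(x,x')$ agree at $x$ and $y$, then $x = axa^{-1}$ for some $a \in \Ort(n)$. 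This is exactly a statement about simultaneous conjugacy of the pairs $(x,x')$ and $(y,y')$ by an \emph{orthogonal} matrix rather than a general invertible one: equality of all trace invariants gives that $(x,x')$ and $(y,y')$ are conjugate by some $g \in \GL_n(\bR)$ (after passing to $\bC$ if necessary and using that a real pair carries the extra constraint that the second entry is the transpose of the first), and one then shows such a $g$ can be replaced by an orthogonal matrix. The mechanism is that $g x g^{-1} = y$ and $g x' g^{-1} = y'$ force $g' g$ to commute with $x$ and with $x'$; writing $s = g'g$, which is symmetric positive-definite, one takes its positive-definite square root $s^{1/2}$ (which lies in the real algebra generated by $x,x'$, hence also commutes appropriately) and checks that $a = g s^{-1/2}$ is orthogonal and still conjugates $x$ to $y$.

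The main obstacle I expect is this last orthogonalization step: one must be careful that the passage between the real and complex pictures is handled correctly (a priori the $\GL$-conjugacy coming from equal trace invariants is over $\bC$, and one needs a \emph{real} orthogonal conjugator), and that the square-root/polar-decomposition argument genuinely produces an element of $\Ort(n)$ rather than merely of the complex orthogonal group. A clean way around the real/complex issue is to note that $\Ort(n)$ is Zariski-dense in $\Ort_n(\bC)$ and that equality of real trace invariants of $(x,x')$ forces $\GL_n(\bR)$-conjugacy directly over $\bR$ for pairs of real matrices, so one can stay over $\bR$ throughout and invoke the real polar decomposition $g = a\,p$ with $a \in \Ort(n)$ and $p$ symmetric positive-definite; the commutation relations then show $p$ centralizes $x$ and $x'$, whence $a$ conjugates $x$ to $y$, completing the orbit-separation argument and hence the proof.
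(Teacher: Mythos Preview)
The paper does not prove this theorem; it is quoted from Sibirski\u\i\ \cite[Theorem 4.33]{KS} as an ``elementary but useful'' fact.  The direct argument is the one the paper's Introduction already sets up: by Procesi's First Fundamental Theorem, the $\GL_n(\bR)$-invariants of $M_n(\bR)\times M_n(\bR)$ are generated by the traces $\tr w(x,y)$; their pullbacks under $\vf_n$ are exactly the functions $x\mapsto\tr w(x,x')$, and by the First Fundamental Theorem for $\Ort(n)$ (recalled in the Introduction) these generate all of the $\Ort(n)$-invariants.  That is the whole proof.

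Your route via orbit separation and polar decomposition is much longer and, as written, has two genuine gaps.  First, ``separates orbits'' does not by itself imply ``equals the invariant ring'': a subalgebra $\pA\subseteq R$ can separate exactly the same points as $R$ without being all of $R$ (e.g.\ $\pA=\bR[t^2,t^3]\subset\bR[t]$ with trivial group).  Your gesture toward a ``normality/integral-closure argument'' does not close this; you would still need to know that $R$ is integral over $\pA$ and that $\pA$ is normal, neither of which you establish.  Second, the step ``equal trace invariants of $(x,x')$ and $(y,y')$ $\Rightarrow$ $\GL_n$-conjugate'' is not automatic, since trace invariants only separate \emph{closed} $\GL_n$-orbits.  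It happens to be true here because the associative subalgebra of $M_n(\bR)$ generated by $x$ and $x'$ is closed under transpose and carries the positive anti-involution $a\mapsto a'$, hence is semisimple, which forces the $\GL_n(\bR)$-orbit of $(x,x')$ to be closed; but you never invoke or prove this, and without it the polar-decomposition step has nothing to stand on.  Given that the two First Fundamental Theorems reduce the statement to a one-line consequence, the orbit-separation approach is both harder and, in the form you wrote, incomplete.
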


Clearly, this theorem remains valid in the complex setting where
we replace $M_n(\bR)$ with $M_n$ or $M_n^0$, $\GL_n(\bR)$ with
$\GL_n=\GL_n(\bC)$, and $\Ort(n)$ with $\Ort_n=\Ort_n(\bC)$.

Let $x$ and $y$ denote two generic matrices in $M_5$ or $M_5^0$.
As in our previous paper \cite{DZ}, we denote by $C_{5,2}$ resp.
$C_{5,2}(0)$ the algebra of polynomial invariants of the module
$M_5\times M_5$ resp. $M_5^0\times M_5^0$.
We have recently constructed in \cite[Theorem 5.1]{DZ}
an MSG of the algebra $C_{5,2}(0)$.
This algebra is generated by the traces of all
words $w(x,y)$ in $x$ and $y$. It has an
involution which sends $\tr w(x,y)$ to $\tr w(y,x)$
for each word $w$. The MSG in \cite{DZ} was chosen so that
it is invariant under this involution up to a $\pm$ sign.
More precisely, if $f(x,y)$ is one of our generators then
either $f(y,x)=\pm f(x,y)$ or $f(y,x)=g(x,y)$, where $g(x,y)$
is another generator in our MSG.

But in order to be able to use effectively Theorem \ref{Sib-2}
we need an MSG of $C_{5,2}(0)$ having a different kind of symmetry.
This algebra admits another involution which we denote by
an asterisk ${}^*$. It can be defined on the algebra of
generic matrices, generated over $\bC$ by $x$ and $y$. If
$w(x,y)$ is any word in $x$ and $y$, it sends
this word to the word $w(x,y)^*$ which is obtained from
$w(x,y)$ by interchanging $x$ and $y$ and by
reversing the order of the letters. (The two operations
can be performed in either order, the outcome is the same.)
We need to construct a ${}^*$-invariant MSG of $C_{5,2}(0)$.
This tedious job is carried out in the Appendix
and the result is presented in Table 3.

We can now construct an MSG of $\pS_5$. The Poincar\'{e} series
$P(M_5^0,\SO_5)$ has been computed in \cite{DM}. It is a rational
function in $t$ which can be written (not in lowest terms)
with numerator the palindromic polynomial
\begin{eqnarray*}
&& 1+t^5+3t^6+4t^7+8t^8+8t^9+15t^{10}+15t^{11}+24t^{12} \\
&& +26t^{13}+34t^{14}+41t^{15}+46t^{16}+50t^{17}+52t^{18}+52t^{19} \\
&& +50t^{20}+\cdots+3t^{31}+t^{32}+t^{37}
\end{eqnarray*}
and denominator $(1-t^2)^2(1-t^3)^2(1-t^4)^4(1-t^5)^3(1-t^8)$.
This suggests that $\pS_5$ should have an HSOP whose degrees are
2,2,3,3,4,4,4,4,5,5,5, 6,6,8. On the basis of our
computations, we conjecture that the traces of the following 14
words in $x$ and $y=x'$ form such an HSOP:
\begin{eqnarray*}
&& x^2,\, xy; \quad x^3,\, x^2y; \quad x^4,\, x^3y,\, x^2y^2,\, (xy)^2; \\
&& x^4y,\, x^3y^2,\, x^2yxy; \quad x^4y^2,\, x^3y^3; \quad x^4y^2xy.
\end{eqnarray*}

\begin{center}
\bf{Table 2: An MSG of the algebra $\pS_5$}
\begin{eqnarray*}
&& x^2,\, xy; \quad x^3,\, x^2y; \quad x^4,\, x^3y,\, x^2y^2,\, (xy)^2; \\
&& x^5,\, x^4y,\, x^3y^2,\, x^2yxy; \quad x^4y^2,\, (x^2y)^2,\, x^3y^3,\, 
x^2y^2xy,\, y^2x^2yx; \\ 
&& x^4yxy,\, x^4y^3,\, x^3y^2xy,\, x^3yxy^2; \quad 
x^4y^2xy,\, x^4yxy^2,\, x^4yx^2y, \\
&& x^3y^2x^2y,\, x^4y^4,\, x^3y^3xy,\, y^3x^3yx,\, 
x^2y^2(xy)^2,\, y^2x^2(yx)^2; \\
&& x^4y^2x^2y,\, y^4x^2y^2x,\, x^4y^2xy^2,\, 
x^3y^2(xy)^2,\, x^3(yx)^2y^2,\, x^3y^3x^2y, \\
&& x^3yxyx^2y,\, x^2y^2xyx^2y; \quad x^4y^3xy^2,\, x^4yxy^2xy,\, x^4y^2x^2y^2, \\
&& x^4y^2(xy)^2,\, x^3y^2x^2yxy,\, x^3yx^2yxy^2,\, x^4yx^2yxy, \\
&& x^4y^2x^3y,\,  x^4y^4xy,\, y^4x^4yx,\, x^3y^3(xy)^2,\, y^3x^3(yx)^2, \\
&& x^3y^3x^2y^2,\, x^2yxy^2(xy)^2; \quad x^4y^4x^2y,\, x^4yx^2y^4,\, x^4y^3x^3y, \\
&& x^4y^3x^2y^2,\, x^4y^3(xy)^2,\, x^4y^2x^2yxy,\, x^4y^2xyxy^2,\, x^4yx^3yxy, \\
&& x^4yx^2y^2xy,\, x^4yx^2yxy^2,\, x^3y^2x^2y^2xy,\, x^3(yx)^2y^2xy; \\
&& x^4y^4x^3y,\, x^4y^3x^3y^2,\, x^4y^3x^2yxy,\, x^4y^3xy^2xy, \\
&& x^4y^2x^3yxy,\, x^4y^2(x^2y)^2,\, x^4y^2xy^2x^2y,\, x^4y^2xyx^2y^2, \\
&& x^4yx^3yx^2y,\, x^4yx^3yxy^2,\, x^4(yx)^2y^2xy,\, x^4y^2x^2y^4, \\
&& x^3y^3x^2yxy^2,\,  x^3y^2(xy)^2yxy; \quad x^4y^3x^3yxy,\, x^4y^3x^2y^2xy, \\ 
&& x^4y^3x^2yx^2y,\, x^4y^2x^3yx^2y,\, x^4(y^2x^2)^2y,\,
x^4y^2(xy)^2yxy,\, x^4y^3x^2y^4; \\
&& x^4y^4x^3yxy,\, x^4y^3x^3y^2xy,\, x^4y^3x^2yx^3y; \quad x^2yxy^3x^2yxy^2xy.
\end{eqnarray*}
\end{center}

The Taylor expansion gives
\begin{eqnarray*}
P(M_5^0,\SO_5) &=& 1+2t^2+2t^3+7t^4+8t^5+20t^6+26t^7+60t^8+82t^9 \\
&& +164t^{10}+236t^{11}+437t^{12}+640t^{13}+1104t^{14} \\
&& +1634t^{15}+2678t^{16}+3940t^{17}+6206t^{18}+\cdots
\end{eqnarray*}

\begin{theorem} \label{S_5}
An MSG of $\pS_5$ has cardinality $89$. Such an MSG, consisting of
homogeneous polynomials, is given by the traces of the $89$ words
$w(x,y)$ listed in Table $2$. The matrix $y$ is used in that table
as an abbreviation for the transpose $x'$ of $x$.
\end{theorem}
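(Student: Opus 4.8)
The plan is to derive Theorem~\ref{S_5} from a minimal generating set of the larger algebra $C_{5,2}(0)$ together with the surjectivity of the pullback homomorphism $\vf_5^*$. The starting point is the complex version of Theorem~\ref{Sib-2}, according to which $\vf_5^*$ maps $C_{5,2}(0)$ onto $\pO_5=\pS_5$. Since $5$ is odd there are no pfaffian invariants to worry about, so $C_{5,2}(0)$ is generated by the traces $\tr w(x,y)$ of words $w$ in the two generic matrices, and hence $\pS_5$ is generated by the traces $\tr w(x,x')$. The crucial elementary remark is that under $\vf_5^*$ the asterisk involution becomes trivial: for any word $w$ one has $w(x,x')'=w^*(x,x')$, because transposition reverses a word and swaps $x\leftrightarrow x'$; since the trace is unchanged by transposition, $\tr w(x,x')=\tr w^*(x,x')$. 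Therefore, starting from the ${}^*$-invariant MSG of $C_{5,2}(0)$ constructed in the Appendix and displayed in Table~3, and keeping one representative from each ${}^*$-orbit, one obtains a generating set of $\pS_5$; this is exactly the list in Table~2, whose cardinality equals the number of ${}^*$-orbits in Table~3.

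It remains to prove that this generating set is minimal and that its cardinality is $89$. I would deduce both from Theorem~\ref{nov-MSG} of the Appendix, which reduces the matter to a finite degree-by-degree computation. For each degree $d$ from $2$ up to the largest degree occurring among the listed generators (degree $15$), and a few further degrees to confirm stabilization, one computes $\dim\pS_5^d$ from the Poincar\'{e} series $P(M_5^0,\SO_5)$ of \cite{DM} (whose Taylor expansion is displayed above) and compares it with the dimension, in degree $d$, of the subalgebra generated by all listed generators of degree strictly smaller than $d$. The difference in degree $d$ is precisely the number of generators that must be adjoined in that degree; a computer algebra computation shows that these differences agree in every degree with the multiplicities of Table~2, vanish beyond degree $15$, and sum to $89$. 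The vanishing beyond the top degree --- where $\pS_5$ is already known to be generated, because the MSG of $C_{5,2}(0)$ of \cite{DZ} has bounded degree and $\vf_5^*$ is onto --- shows that the $89$ traces generate $\pS_5$; minimality of the set, and the fact that $89$ is the cardinality common to all minimal generating sets of the graded connected algebra $\pS_5$, then follow from the standard criterion that a homogeneous generating set is minimal, and of smallest possible size, exactly when its elements project to a basis of the space of indecomposables of $\pS_5$.

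The main obstacle is computational rather than conceptual. One must control the combinatorial growth of words of length up to $15$ in two generic traceless $5\times5$ matrices, reduce trace monomials modulo the fundamental trace identities for $5\times5$ matrices (the Cayley--Hamilton relation and its polarizations), and compute the ranks of the sizeable matrices expressing products of lower-degree invariants in terms of a spanning set of $\pS_5^d$; for $n=5$ these computations are substantially heavier than in the cases $n\le4$ treated above. A second, subsidiary difficulty occurs in the Appendix: the MSG of $C_{5,2}(0)$ obtained in \cite{DZ} was chosen so as to respect the other involution $\tr w(x,y)\leftrightarrow\tr w(y,x)$, and it has to be re-selected, degree by degree, so as to become stable under ${}^*$ while staying minimal --- this is the content of Theorem~\ref{nov-MSG} and Table~3. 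Once these two inputs are in hand, the passage to $\pS_5$ is essentially bookkeeping.
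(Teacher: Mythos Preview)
Your overall strategy matches the paper's: pull back the ${}^*$-invariant MSG of $C_{5,2}(0)$ from Table~3 via $\vf_5^*$, use the identity $\tr w(x,x')=\tr w^*(x,x')$ to collapse ${}^*$-orbits, and then verify minimality. However, your first paragraph contains a factual error that the paper's proof is careful to address. You assert that Table~2 is ``exactly'' the set of ${}^*$-orbit representatives from Table~3 and that its cardinality equals the number of such orbits. This is false: Table~3 lists $171$ matrices, organized as $73$ braced pairs $\{w,w^*\}$ together with $25$ singletons, so there are $73+25=98$ orbits, not $89$. After setting $y=x'$ one therefore obtains a generating set $S$ of $\pS_5$ with $|S|=98$; the paper then checks minimality and finds that $9$ of these $98$ traces are redundant---the nine words are listed explicitly in the proof---leaving the $89$ of Table~2.

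The conceptual point you are glossing over is that a surjection of graded algebras need not carry an MSG to an MSG: relations in $\pS_5$ created by the specialization $y=x'$ can render some images of the old generators decomposable. Your second paragraph's degree-by-degree comparison with $P(M_5^0,\SO_5)$ would in principle detect these nine redundancies, so the plan is salvageable; but as written you claim the $89$ fall out directly from the orbit count, which they do not. The paper's argument is also leaner than your outline: rather than recomputing every $\dim\pS_5^d$, it only tests, for each of the $98$ candidates, whether it lies in the subalgebra generated by the candidates of lower degree.
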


\begin{proof}
By setting $y=x'$ in the MSG for $C_{5,2}(0)$ we obtain a set, $S$, of
generators of $\pS_5$ (see Theorem \ref{Sib-2}).
After setting $y=x'$, two words that occur inside a pair of braces 
in Table 3 will have the same trace. Since there are 73 such pairs,
we have $|S|=73+25=98$. By checking for minimality, we find that
the following 9 words are redundant:
\begin{eqnarray*}
&& x^4y^4x^2y^2,\, y^3x^3y^2xyx^2,\, x^4yxy^2x^2y^3,\, x^4yxy^2xyxy^2,\, 
x^4y^4x^3y^3, \\
&& x^3y^2xy^2x^2yxy^2,\, x^4y^2x^2yxy^4,\, x^4yx^2y^2x^3y^3,\, x^4y^2x^3yx^2yxy.
\end{eqnarray*}
Hence the traces of the 89 words listed in Table 2 form an MSG of $\pS_5$.
\end{proof}

\section{Appendix: A new MSG of $C_{5,2}(0)$} \label{app}

The algebra $C_{5,2}(0)$ is not only $\bZ$-graded but also
$\bZ^2$-graded. Indeed we can assign degree $(1,0)$ to each
coordinate function of $x$ and degree $(0,1)$ to those of $y$.
The MSG constructed in \cite{DZ} is $\bZ^2$-graded, i.e.,
each of its elements is homogeneous with respect to this
$\bZ^2$-gradation. The new MSG that we are going to construct
also has this property. Moreover the number of elements of
each bidegree in the two MSG's are the same. These facts
were useful in the actual construction of this new MSG.
The $\bZ^2$-graded Poincar\'{e} series of $C_{5,2}(0)$
plays an important role in our computations.
It has been computed in our paper \cite[Theorem 2.2]{DZ}. 
Its Taylor expansion up to and including
the terms of order 15 is:

\begin{center}
\begin{eqnarray*}
&& P(M_5^0\times M_5^0,\GL_5) = 1+{s}^{2}+st+{t}^{2}+{s}^{3}+{s}^{2}t
+s{t}^{2}+{t}^{3}+2\,{s}^{4} \\
&& +2\,{s}^{3}t+4\,{s}^{2}{t}^{2}+2\,s{t}^{3}+2\,{t}^{4}
+2\,{s}^{5}+3\,{s}^{4}t+5\,{s}^{3}{t}^{2}+5\,{s}^{2}{t}^{3} \\
&& +3\,s{t}^{4}+2\,{t}^{5}+3\,{s}^{6}+4\,{s}^{5}t+10\,{s}^{4}{t}^{2}
+11\,{s}^{3}{t}^{3}+10\,{s}^{2}{t}^{4}+4\,s{t}^{5} \\
&& +3\,{t}^{6}+3\,{s}^{7}+6\,{s}^{6}t+13\,{s}^{5}{t}^{2}
+18\,{s}^{4}{t}^{3}+18\,{s}^{3}{t}^{4}+13\,{s}^{2}{t}^{5} \\
&& +6\,s{t}^{6}+3\,{t}^{7}+5\,{s}^{8}+8\,{s}^{7}t+21\,{s}^{6}{t}^{2}
+30\,{s}^{5}{t}^{3}+40\,{s}^{4}{t}^{4}+30\,{s}^{3}{t}^{5} \\
&& +21\,{s}^{2}{t}^{6}+8\,s{t}^{7}+5\,{t}^{8}+5\,{s}^{9}+10\,{s}^{8}t
+26\,{s}^{7}{t}^{2}+46\,{s}^{6}{t}^{3} \\
&& +61\,{s}^{5}{t}^{4}+61\,{s}^{4}{t}^{5}+46\,{s}^{3}{t}^{6}
+26\,{s}^{2}{t}^{7}+10\,s{t}^{8}+5\,{t}^{9}+7\,{s}^{10} \\
&& +13\,{s}^{9}t+38\,{s}^{8}{t}^{2}+66\,{s}^{7}{t}^{3}
+105\,{s}^{6}{t}^{4}+115\,{s}^{5}{t}^{5}+105\,{s}^{4}{t}^{6} \\
&& +66\,{s}^{3}{t}^{7}+38\,{s}^{2}{t}^{8}+13\,s{t}^{9}+7\,{t}^{10}
+7\,{s}^{11}+16\,{s}^{10}t+46\,{s}^{9}{t}^{2} \\
&& +92\,{s}^{8}{t}^{3}+152\,{s}^{7}{t}^{4}+193\,{s}^{6}{t}^{5}
+193\,{s}^{5}{t}^{6}+152\,{s}^{4}{t}^{7}+92\,{s}^{3}{t}^{8} \\
&& +46\,{s}^{2}{t}^{9}+16\,s{t}^{10}+7\,{t}^{11}+10\,{s}^{12}
+20\,{s}^{11}t+62\,{s}^{10}{t}^{2}+125\,{s}^{9}{t}^{3} \\
&& +229\,{s}^{8}{t}^{4}+310\,{s}^{7}{t}^{5}+362\,{s}^{6}{t}^{6}
+310\,{s}^{5}{t}^{7}+229\,{s}^{4}{t}^{8}+125\,{s}^{3}{t}^{9} \\
&& +62\,{s}^{2}{t}^{10}+20\,s{t}^{11}+10\,{t}^{12}+10\,{s}^{13}
+24\,{s}^{12}t+74\,{s}^{11}{t}^{2}+163\,{s}^{10}{t}^{3} \\
&& +309\,{s}^{9}{t}^{4}+470\,{s}^{8}{t}^{5}+584\,{s}^{7}{t}^{6}
+584\,{s}^{6}{t}^{7}+470\,{s}^{5}{t}^{8}+309\,{s}^{4}{t}^{9} \\
&& +163\,{s}^{3}{t}^{10}+74\,{s}^{2}{t}^{11}+24\,s{t}^{12}
+10\,{t}^{13}+13\,{s}^{14}+29\,{s}^{13}t+95\,{s}^{12}{t}^{2} \\
&& +210\,{s}^{11}{t}^{3}+429\,{s}^{10}{t}^{4}+683\,{s}^{9}{t}^{5}
+941\,{s}^{8}{t}^{6}+1024\,{s}^{7}{t}^{7}+941\,{s}^{6}{t}^{8} \\
&& +683\,{s}^{5}{t}^{9}+429\,{s}^{4}{t}^{10}+210\,{s}^{3}{t}^{11}
+95\,{s}^{2}{t}^{12}+29\,s{t}^{13}+13\,{t}^{14} \\
&& +14\,{s}^{15}+34\,{s}^{14}t+111\,{s}^{13}{t}^{2}
+265\,{s}^{12}{t}^{3}+553\,{s}^{11}{t}^{4}+956\,{s}^{10}{t}^{5} \\
&& +1396\,{s}^{9}{t}^{6}+1681\,{s}^{8}{t}^{7}+1681\,{s}^{7}{t}^{8}
+1396\,{s}^{6}{t}^{9}+956\,{s}^{5}{t}^{10} \\
&& +553\,{s}^{4}{t}^{11}+265\,{s}^{3}{t}^{12}+111\,{s}^{2}{t}^{13}
+34\,s{t}^{14}+14\,{t}^{15}+\cdots
\end{eqnarray*}
\end{center}

\newpage
\begin{center}
\bf{Table 3: A ${}^*$-invariant MSG of the algebra $C_{5,2}(0)$}
\begin{eqnarray*}
&& \{x^2,y^2\},\, xy; \\
&& \{x^3,y^3\},\, \{x^2y,xy^2\}; \\
&& \{x^4,y^4\},\, \{x^3y,xy^3\},\, x^2y^2,\, (xy)^2; \\
&& \{x^5,y^5\},\, \{x^4y,xy^4\},\, \{x^3y^2,x^2y^3\},\, \{x^2yxy,xyxy^2\}; \\
&& \{x^4y^2,x^2y^4\},\, \{(x^2y)^2,(xy^2)^2\}\},\, x^3y^3,\, 
x^2y^2xy,\, y^2x^2yx; \\ 
&& \{x^4yxy,xyxy^4\},\, \{x^4y^3,x^3y^4\},\, \{x^3y^2xy,xyx^2y^3\},\, 
\{x^3yxy^2,x^2yxy^3\}; \\
&& \{x^4y^2xy,xyx^2y^4\},\, \{x^4yxy^2,x^2yxy^4\},\, \{x^4yx^2y,xy^2xy^4\},\, \\
&& \{x^3y^2x^2y,xy^2x^2y^3\},\, x^4y^4,\, x^3y^3xy,\, y^3x^3yx,\, 
x^2y^2(xy)^2,\, y^2x^2(yx)^2; \\
&& \{x^4y^2x^2y,xy^2x^2y^4\},\, \{y^4x^2y^2x,yx^2y^2x^4\},\,
\{x^4y^2xy^2,x^2yx^2y^4\},\, \\
&& \{x^3y^2(xy)^2,(xy)^2x^2y^3\},\, \{x^3(yx)^2y^2,x^2(yx)^2y^3\},\, 
\{x^3y^3x^2y,xy^2x^3y^3\},\, \\
&& \{x^3yxyx^2y,xy^2xyxy^3\},\, \{x^2y^2xyx^2y,xy^2xyx^2y^2\}; \\
&& \{x^4y^3xy^2,x^2yx^3y^4\},\, \{x^4yxy^2xy,xyx^2yxy^4\},\, 
\{x^4y^2x^2y^2,x^2y^2x^2y^4\},\, \\
&& \{x^4y^2(xy)^2,(xy)^2x^2y^4\},\, \{x^3y^2x^2yxy,xyxy^2x^2y^3\},\, \\ 
&& \{x^3yx^2yxy^2,x^2yxy^2xy^3\},\, \{x^4yx^2yxy,xyxy^2xy^4\},\, \\
&& \{x^4y^2x^3y,xy^3x^2y^4\},\,  x^4y^4xy,\, y^4x^4yx,\, 
x^3y^3(xy)^2,\, y^3x^3(yx)^2,\, \\
&& x^3y^3x^2y^2,\, x^2yxy^2(xy)^2; \\
&& \{x^4y^4x^2y,xy^2x^4y^4\},\, \{x^4yx^2y^4,x^4y^2xy^4\},\, 
\{x^4y^3x^3y,xy^3x^3y^4\},\, \\
&& \{x^4y^3x^2y^2,x^2y^2x^3y^4\},\, \{x^4y^3(xy)^2,(xy)^2x^3y^4\},\, \\ 
&& \{x^4y^2x^2yxy,xyxy^2x^2y^4\},\, \{x^4y^2xyxy^2,x^2yxyx^2y^4\},\, \\
&& \{x^4yx^3yxy,xyxy^3xy^4\},\, \{x^4yx^2y^2xy,xyx^2y^2xy^4\},\, \\
&& \{x^4yx^2yxy^2,x^2yxy^2xy^4\},\, \{x^3y^2x^2y^2xy,xyx^2y^2x^2y^3\},\, \\ 
&& \{x^3(yx)^2y^2xy,xyx^2(yx)^2y^3\}; \\
&& \{x^4y^4x^3y,xy^3x^4y^4\},\, \{x^4y^3x^3y^2,x^2y^3x^3y^4\},\, \\
&& \{x^4y^3x^2yxy,xyxy^2x^3y^4\},\, \{x^4y^3xy^2xy,xyx^2yx^3y^4\},\, \\
&& \{x^4y^2x^3yxy,xyxy^3x^2y^4\},\, \{x^4y^2(x^2y)^2,(xy^2)^2x^2y^4\},\, \\
&& \{x^4y^2xy^2x^2y,xy^2x^2yx^2y^4\},\, \{x^4y^2xyx^2y^2,x^2y^2xyx^2y^4\},\, \\
&& \{x^4yx^3yx^2y,xy^2xy^3xy^4\},\, \{x^4yx^3yxy^2,x^2yxy^3xy^4\},\, \\
&& \{x^4(yx)^2y^2xy,xyx^2(yx)^2y^4\},\, x^4y^4x^2y^2,\, x^4y^2x^2y^4,\, \\
&& x^3y^3x^2yxy^2,\, y^3x^3y^2xyx^2,\, x^3y^2(xy)^2yxy+xyx(xy)^2x^2y^3; \\
&& \{x^4y^3x^3yxy,xyxy^3x^3y^4\},\, \{x^4y^3x^2y^2xy,xyx^2y^2x^3y^4\},\, \\ 
&& \{x^4y^3x^2yx^2y,xy^2xy^2x^3y^4\},\, \{x^4y^2x^3yx^2y,xy^2xy^3x^2y^4\},\,  \\
&& \{x^4(y^2x^2)^2y,x(y^2x^2)^2y^4\},\, 
\{x^4y^2(xy)^2yxy,xyx(xy)^2x^2y^4\},\, \\
\end{eqnarray*}

\bf{Table 3 (continued)}
\begin{eqnarray*}
&& \{x^4y^3x^2y^4,x^4y^2x^3y^4\},\, \{x^4yxy^2x^2y^3,x^3y^2x^2yxy^4\},\, \\ 
&& \{x^4yxy^2xyxy^2,x^2yxyx^2yxy^4\}; \\
&& \{x^4y^4x^3yxy,xyxy^3x^4y^4\},\, \{x^4y^3x^3y^2xy,xyx^2y^3x^3y^4\},\, \\ 
&& \{x^4y^3x^2yx^3y,xy^3xy^2x^3y^4\},\, x^4y^4x^3y^3,\, \\
&& x^3y^2xy^2x^2yxy^2+x^2yxy^2x^2yx^2y^3,\, x^4y^2x^2yxy^4+x^4yxy^2x^2y^4; \\
&& \{x^2yxy^3x^2yxy^2xy,xyx^2yxy^2x^3yxy^2\},\, 
\{x^4yx^2y^2x^3y^3,x^3y^3x^2y^2xy^4\},\, \\
&& \{x^4y^2x^3yx^2yxy,xyxy^2xy^3x^2y^4\}.
\end{eqnarray*}
\end{center}

Recall the involution ${}^*$ defined in the previous section.
\begin{theorem} \label{nov-MSG}
The algebra $C_{5,2}(0)$ of polynomial $\GL_5$-invariants of the module
$M_5^0\times M_5^0$ admits an MSG, consisting of bihomogeneous
polynomials, which is invariant under the involution ${}^*$.
Such an MSG is given by the traces of the $171$ matrices
listed in Table $3$.
(Each pair of matrices singled out by the braces has the form $\{w,w^*\}$
where $w$ is a word in $x$ and $y$.)
\end{theorem}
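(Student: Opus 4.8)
The idea is to reselect a minimal generating set bidegree by bidegree, steering the choices with the involution $*$, and to read the multiplicities off from the MSG of \cite{DZ}. First I would record that $*$ is a genuine involutive algebra automorphism of $C_{5,2}(0)$: it is induced by the linear map $\tau\colon M_5^0\times M_5^0\to M_5^0\times M_5^0$, $(x,y)\mapsto(y',x')$, where $z'$ denotes transpose. One checks directly that $\tau\bigl(g\cdot(x,y)\bigr)=(g')^{-1}\cdot\tau(x,y)$, so $\tau$ intertwines the $\GL_5$-action with itself through the automorphism $g\mapsto(g')^{-1}$ of $\GL_5$; hence $\tau^{*}$ carries invariants to invariants, i.e.\ it is an algebra automorphism of $C_{5,2}(0)$, and on a trace monomial it acts by $\tr w(x,y)\mapsto\tr w(y',x')=\tr\bigl((w(y',x'))'\bigr)=\tr w^*(x,y)$, which is exactly the $*$-action of Section \ref{Slu_5}. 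Since $\tau^2=\mathrm{id}$, $*$ is an involution; it sends the $(p,q)$-bidegree component of $R:=C_{5,2}(0)$ to the $(q,p)$-component, preserves the augmentation ideal $R_+$ and its square, and hence acts on each bidegree component of the space of indecomposables $W:=R_+/R_+^2$. As $R$ is generated by trace monomials and, by \cite[Theorem 5.1]{DZ}, the bidegrees occurring and the multiplicities $m_{(p,q)}:=\dim W_{(p,q)}$ of a minimal generating set are known (finitely many bidegrees, total $\sum m_{(p,q)}=171$), it suffices to produce in each such bidegree trace functions whose classes span $W_{(p,q)}$, arranged so that the whole collection is $*$-stable.

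For an off-diagonal bidegree ($p\ne q$) this costs nothing beyond one rank computation: $*$ restricts to an isomorphism $W_{(p,q)}\xrightarrow{\sim}W_{(q,p)}$, so $m_{(p,q)}=m_{(q,p)}$, and it is enough to choose, for $p<q$, words $w$ of bidegree $(p,q)$ whose trace classes form a basis of $W_{(p,q)}$; their $*$-images $w^*$ then automatically have bidegree $(q,p)$ and trace classes forming a basis of $W_{(q,p)}$. This is the origin of the brace pairs $\{w,w^*\}$ in Table 3. To pick $w$ one lists the bidegree-$(p,q)$ trace monomials, reduces them modulo the decomposable subspace (the span of products of the already-selected generators of lower total degree), and extracts an independent subset of size $m_{(p,q)}$; because $5\times5$ trace monomials satisfy relations once the total degree is moderate, this is done with a computer.

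The delicate case is a diagonal bidegree $p=q$, where $*$ is a true involution of $W_{(p,p)}$; over $\bC$ write $W_{(p,p)}=W^+_{(p,p)}\oplus W^-_{(p,p)}$ for its $\pm1$-eigenspaces. A $*$-orbit of a trace monomial is either a fixed point, whose class lies in $W^+_{(p,p)}$, or a two-element orbit $\{\tr w,\tr w^*\}$ whose span meets $W^+_{(p,p)}$ and $W^-_{(p,p)}$ in one dimension each, via $\tr w\pm\tr w^*$; a symmetrized sum $\tr u+\tr u^*$ also lands in $W^+_{(p,p)}$. Accordingly I would build a basis of $W_{(p,p)}$ from a $*$-stable collection made of some two-element monomial orbits -- each supplying one $W^+$- and one $W^-$-direction -- together with enough $*$-fixed monomials, or symmetric sums $\tr u+\tr u^*$, to cover the surplus $\dim W^+_{(p,p)}-\dim W^-_{(p,p)}$. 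This requires $\dim W^+_{(p,p)}\ge\dim W^-_{(p,p)}$ and that the surplus be realized by the available symmetric monomials and sums, both of which I would verify bidegree by bidegree; this is what accounts for the unbraced entries of Table 3, including the handful of genuine sums such as $x^3y^2(xy)^2yxy+xyx(xy)^2x^2y^3$.

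Putting the pieces together: by construction the $171$ trace functions of Table 3 have the same bidegree distribution as the minimal generating set of \cite{DZ}, namely $m_{(p,q)}$ of them in each bidegree; the rank computations above show that in each bidegree their classes span $W_{(p,q)}$, hence (the count matching $\dim W_{(p,q)}$) form a basis of it, so the whole collection is a minimal generating set of $C_{5,2}(0)$ by graded Nakayama, and it is $*$-stable by construction. I expect the only real difficulty to be organizational and computational: arranging the case analysis so that in each diagonal bidegree the $*$-eigenvalue multiplicities are compatible with a basis built from monomials and symmetric sums, and carrying out the numerous ``reduction modulo decomposables'' rank computations in bidegrees of fairly large total degree, where the fundamental trace identity for $5\times5$ matrices already forces relations among trace monomials -- this is exactly the ``tedious job'' alluded to in Section \ref{Slu_5}, and it is where a computer algebra system such as Maple is indispensable.
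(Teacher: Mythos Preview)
Your plan is correct and would certify Table~3 just as well as the paper's argument, but the organization is genuinely different. The paper does not work in the indecomposable quotient $R_+/R_+^2$ at all: given the 171 candidates, it computes, for each bidegree $(i,j)$ with $i+j\le 15$, the dimension of the $(i,j)$-component of the subalgebra they generate and checks it against the coefficient of $s^it^j$ in the known Poincar\'e series $P(M_5^0\times M_5^0,\GL_5)$; since \cite{DZ} gives generation degree $\le 15$, this proves generation, and minimality follows from the cardinality matching that of the MSG in \cite{DZ}. The $*$-stability is then literally read off the table.

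Your approach instead explains \emph{how} such a table can be built: the off-diagonal bidegrees are handled by duality under $*$, and in each diagonal bidegree you analyze the $\pm 1$-eigenspaces of $*$ on $W_{(p,p)}$ and fill them with trace monomials in $*$-orbits, $*$-fixed monomials, or symmetrized sums $\tr u+\tr u^*$. One thing worth noting is that in Table~3 no brace pair occurs in a diagonal bidegree; all diagonal entries are genuinely $*$-fixed in $R$. In your framework this is the (computational) statement that $W^-_{(p,p)}=0$ for every $p$ that occurs, which you would discover in the course of the rank checks. Either way the heavy lifting is the same Maple computation; what your write-up buys is a clear reason why a $*$-stable MSG of the prescribed shape must exist, while the paper's version is a bare verification that the displayed one does.
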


\begin{proof}
The proof is computational (we used Maple). The straightforward algorithm
is described for instance in \cite[Section 2.6]{DK}. The main point is to 
verify that the dimensions of the $\bZ^2$-homogeneous components of the
$\bZ^2$-graded algebra $\pA$ generated by the above 171 traces match the
coefficients of the $\bZ^2$-graded Poincar\'{e} series of the algebra
$C_{5,2}(0)$. 
We have verified that that this is indeed so, i.e., each of the
coefficients of $s^i t^j$ with $i+j\le 15$, in the above Taylor expansion,
is equal to the dimension of the $(i,j)$-th homogeneous component of $\pA$.
As we know from \cite{DZ} that $C_{5,2}(0)$ is
generated by polynomials of degree $\le15$, the first assertion follows.

The second assertion is easy to verify by inspection of Table 3.
We only point out that each matrix listed in this table,
which is not inside the braces, has the property that its trace
remains unchanged when we apply the involution ${}^*$.
For instance, let us take the matrix $y^3x^3y^2xyx^2$ whose
trace has bidegree $(6,6)$. By applying the involution ${}^*$
we obtain the matrix $y^2xyx^2y^3x^3$. Now observe that this
matrix can be obtained from the original one by a cyclic
permutation of its factors. Consequently these two matrices
indeed have the same trace.
\end{proof}

Let us remark that all but three of the matrices listed in Table 3
are just words in $x$ and $y$.
In the three exceptional cases they have the
form $w+w^*$ where $w$ is a word in $x$ and $y$.

\end{document}